 \newtheorem{remark}{Remark}
 \newtheorem{lemma}[remark]{Lemma}
 \newtheorem{theorem}[remark]{Theorem}
 \newtheorem{proposition}[remark]{Proposition}
 \newtheorem{corollary}[remark]{Corollary}
  \newtheorem{claim}[remark]{Claim}
  \newtheorem{conjecture}[remark]{Conjecture}
\newcommand{\pd}{\operatorname{pd}}
\newcommand{\ex}{\operatorname{ex}}
\title{On the  partition dimension of  unicyclic graphs}
\author{ Juan A. Rodr\'{\i}guez-Vel\'{a}zquez \footnote{\small e-mail:\mbox{\tt juanalberto.rodriguez\@@urv.cat}} $^1$, Ismael G. Yero\footnote{\small e-mail:\mbox{\tt ismael.gonzalez\@@uca.es}} $^2$
and Henning Fernau\footnote{\small e-mail:\mbox{\tt
fernau\@@uni-trier.de}} $^3$  \\
$^1${\small Departament d'Enginyeria Inform\`{a}tica i Matem\`{a}tiques
}\\
{\small Universitat Rovira i Virgili,  Av. Pa\"{\i}sos Catalans 26, 43007
Tarragona, Spain.}
\\
$^2${\small Departamento de Matem\'aticas, Escuela Polit\'ecnica Superior de Algeciras}\\
{\small Universidad de C\'adiz,} {\small
Av. Ram\'on Puyol s/n, 11202 Algeciras, Spain.}
\\
$^3${\small FB 4-Abteilung Informatik Universit\"{a}t Trier, 54286
Trier, Germany.}}
\begin{document}

\maketitle

\begin{abstract}
Given an ordered partition $\Pi =\{P_1,P_2, ...,P_t\}$ of the
vertex set $V$ of a connected graph $G=(V,E)$, the  \emph{partition representation} of a vertex
$v\in V$ with respect to the partition $\Pi$ is the vector
$r(v|\Pi)=(d(v,P_1),d(v,P_2),...,d(v,P_t))$, where $d(v,P_i)$ represents the distance between the vertex $v$ and
the set $P_i$. A partition  $\Pi$ of $V$ is a  \emph{resolving partition}  if different vertices of $G$ have different partition representations, i.e.,  for every pair
of vertices $u,v\in V$, $r(u|\Pi)\ne r(v|\Pi)$. The  \emph{partition
dimension} of $G$ is the minimum number of sets in any resolving
partition for $G$. In this paper we obtain several tight bounds
on the partition dimension of unicyclic graphs.\end{abstract}

{\it Keywords:}  Resolving sets, resolving partition, partition
dimension, trees.

{\it AMS Subject Classification numbers:}   05C05, 05C12

\section{Introduction}

 The concepts of resolvability and location in graphs were described
independently by Harary and Melter \cite{harary} and Slater
\cite{leaves-trees}, to define the same structure in a
graph. After these papers were published several authors
developed diverse theoretical works about this topic, for instance,
\cite{pelayo1,pelayo2,chappell,chartrand,chartrand1,chartrand2,fehr,haynes,landmarks,YKRdim}.
 Slater described the usefulness of these ideas into long range
aids to navigation \cite{leaves-trees}.
Also, these concepts  have some applications in chemistry for representing chemical compounds \cite{pharmacy1,pharmacy2} or to problems of
pattern recognition and image processing, some of which involve the use of hierarchical data structures \cite{Tomescu1}.
Other applications of this concept to
navigation of robots in networks and other areas appear in
\cite{chartrand,robots,landmarks}. Some variations on resolvability
or location have been appearing in the literature, like those about
conditional resolvability \cite{survey}, locating domination
\cite{haynes}, resolving domination \cite{brigham} and resolving
partitions \cite{chappell,chartrand2,fehr,tomescu,partitionDimensionCorona,YRpd}.

Given a graph $G=(V,E)$ and a set of vertices
$S=\{v_1,v_2,...,v_k\}$ of $G$, the  \emph{metric representation} of a
vertex $v\in V$ with respect to $S$ is the vector
$r(v|S)=(d(v,v_1),d(v,v_2),...,d(v,v_k))$, where $d(v,v_i)$\footnote{To avoid ambiguity in some cases we will denote the distance between two vertices $u,v$ of a graph $G$ by $d_G(u,v)$.} denotes the distance between the vertices $v$ and
$v_i$, $1\le i\le k$. We say that $S$ is a  \emph{resolving set}  if different vertices of $G$ have different metric representations, i.e.,  for every
pair of vertices $u,v\in V$, $r(u|S)\ne r(v|S)$. The  \emph{metric
dimension}\footnote{Also called locating number.} of $G$ is the
minimum cardinality of any resolving set of $G$, and it is
denoted by $\dim(G)$. 

Given an ordered partition $\Pi =\{P_1,P_2, ...,P_t\}$ of the
vertices of $G$, the  \emph{partition representation} of a vertex
$v\in V$ with respect to the partition $\Pi$ is the vector
$r(v|\Pi)=(d(v,P_1),d(v,P_2),...,d(v,P_t))$, where $d(v,P_i)$, with
$1\leq i\leq t$, represents the distance between the vertex $v$ and
the set $P_i$, i.e.,  $d(v,P_i)=\min_{u\in P_i}\{d(v,u)\}$. We say
that $\Pi$ is a \emph{resolving partition}  if different vertices of $G$ have different partition representations, i.e.,  for every pair
of vertices $u,v\in V$, $r(u|\Pi)\ne r(v|\Pi)$. The  \emph{partition
dimension} of $G$ is the minimum number of sets in any resolving partition for $G$ and it is denoted by $\pd(G)$.

The partition
dimension of graphs was studied in \cite{chappell,chartrand2,fehr,Saenpholphat1,tomescu,partitionDimensionCorona,YRpd}.
For instance, Chappell, Gimbel and Hartman obtained several
relationships between metric dimension, partition dimension, diameter, and other graph parameters  \cite{chappell}.   Charttrand, Zhang and Salehi showed that for every nontrivial graph $G$ it follows  $\pd(G)\le \pd(G\square K_2)$ and they also showed that for an induced subgraph $H$ of a connected graph $G$ the ratio $r_p=\pd(H)/\pd(G)$ can be arbitrarily large  \cite{chartrand2}.
The partition dimension of some specific families of graphs was studied further in a number
of other papers. For instance, Cayley digraphs were studied by Fehr, Gosselin and Oellermann \cite{fehr}, the infinite graphs ($\mathbb{Z}^2$, $\xi_4$) and ($\mathbb{Z}^2$, $\xi_8$) (where the set of vertices is the set of points of the integer lattice and the set of edges consists
of all pairs of vertices whose city block and chessboard distances, respectively, are $1$) were studied by Tomescu \cite{tomescu}, corona product graphs were studied by Rodr\'{\i}guez-Vel\'{a}zquez,  Yero and  Kuziak \cite{partitionDimensionCorona} and Cartesian product graphs were studied by Yero and Rodr\'{\i}guez-Vel\'{a}zquez \cite{YRpd}.
Here we study  the partition dimension of unicyclic graphs.  A similar study on the metric dimension was previously done by Poisson and Zhang \cite{PartitionDimTrees}.

\section{Results}

The set of all spanning trees of a connected graph $G$ is denoted by ${\cal T}(G)$.
It was shown in \cite{chartrand} that if $G$ is a connected unicyclic graph  of order at least $3$ and $T\in {\cal T}(G)$,  then
\begin{equation}\label{dimensionUniTrees}
\dim(T) - 2\le \dim(G)\le \dim(T) + 1.
\end{equation}
A formula for the dimension of trees that are not  paths has been established in \cite{chartrand,harary,leaves-trees}. In order to present this formula, we need additional definitions. A vertex of degree at least $3$ in a graph $G$ will be called a \emph{major vertex} of $G$.
Any pendant vertex $u$ of  $G$ is said to be a \emph{terminal vertex} of a major vertex $v$ of $G$ if
$d(u, v)<d(u,w)$ for every other major vertex $w$ of $G$. The \emph{terminal degree}  of
a major vertex $v$ is the number of terminal vertices of $v$. A major vertex $v$ of $G$ is an
\emph{exterior major vertex} of $G$ if it has positive terminal degree.
\begin{figure}[h]
  \centering
  \includegraphics[width=0.3\textwidth]{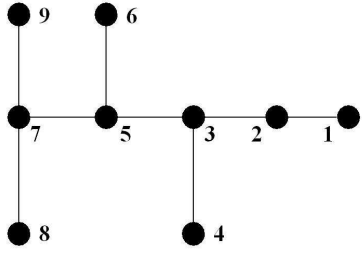}
  \caption{In this tree the vertex 3 is an exterior major vertex of terminal degree two: 1 and 4 are terminal vertices of 3. }\label{segundomejor}
\end{figure}

Let $n_1(G)$ denote the number of pendant vertices of  $G$, and let $\ex(G)$ denote the number
of exterior major vertices of $G$. We can now state the formula for the dimension of a tree \cite{chartrand,harary,leaves-trees}:  if $T$ is a tree that is not a path, then
\begin{equation}\label{bounDimUsandoExterior}\dim(T) = n_1(T) - \ex(T).\end{equation}
Thus, by the above result and (\ref{dimensionUniTrees})  we have that if $G$ is a connected unicyclic graph  of order at least $3$ and $T\in {\cal T}(G)$, then
\begin{equation}\label{n1-ex-plus-one}
n_1(T) - \ex(T) -2 \le \dim(G)\le  n_1(T) - \ex(T) + 1.
\end{equation}

\noindent
\textbf{Example}. Let $G$ be a graph obtained in the following way: we begin with a cycle $C_4=u_1u_2u_3u_4u_1$ and, then we add vertices
$v_{1},\ldots, v_{k}, k\geq 2$, and edges $u_1v_{i}, 1\leq i\leq k$. Thus, $\dim(G)=k+1$. Now, let $T\in \mathcal{T}(G)$ obtained by deleting the edge $u_4u_1$ in the cycle. Hence, we have $n_1(T)=k+1$ and $\ex(T)=1$. So, the above upper bound is tight.




It is natural to think that the partition dimension and metric dimension are related; it was shown in  \cite{chartrand2} that for any
nontrivial connected graph $G$ we have
\begin{equation}\label{partition-dimension}
\pd(G)\le  \dim(G) + 1.
\end{equation}
As a consequence of (\ref{n1-ex-plus-one}), if $G$ is a connected unicyclic graph and $T\in {\cal T}(G)$, then
\begin{equation}\label{partition-dimension1}
\pd(G)\le   n_1(T) - \ex(T) + 2.
\end{equation}

The following well-known claim is very easy to verify.

\begin{claim} \label{distancescycles} Let $C$ be a cycle graph. If $x,y,u$ and $v$ are vertices of $C$ such that $x$ and $y$ are adjacent and $d(u,x)=d(v,x)$, then $d(u,y)\neq d(v,y)$.
\end{claim}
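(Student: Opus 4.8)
The plan is to put coordinates on the cycle and convert everything into congruences modulo $n := |V(C)|$. I would identify $V(C)$ with $\mathbb{Z}_n$ so that two vertices $i,j$ are adjacent exactly when $i-j\equiv\pm 1\pmod n$, and record that $d(i,j)=\min\{r,\,n-r\}$, where $r\in\{0,1,\dots,n-1\}$ is the residue of $i-j$; in particular $d(i,j)$ depends only on $i-j$ and is invariant under $i-j\mapsto -(i-j)$. Note also that the statement is only of interest when $u\neq v$, so I would make that assumption explicit at the outset (if $u=v$ there is nothing to resolve).

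The key preliminary observation I would isolate is the following: for vertices $a,b,c$ of $C$, one has $d(a,c)=d(b,c)$ if and only if $a\equiv b\pmod n$ or $a+b\equiv 2c\pmod n$. For the forward direction, if $d(a,c)=d(b,c)=m$, then the residues of $a-c$ and of $b-c$ both lie in $\{m,\,n-m\}$, so either these residues are equal, giving $a\equiv b$, or they sum to $n\equiv 0$, giving $a+b\equiv 2c$. The backward direction is immediate from the invariance of $d$ noted above. (This is exactly the point where a little care is needed: when $n$ is even and $m=n-m$, i.e.\ $a$ and $b$ are both antipodal to $c$, the congruence $a+b\equiv 2c$ still holds, so the two alternatives are not exclusive, but the equivalence remains correct.)

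Now I would argue by contradiction: suppose $d(u,x)=d(v,x)$ and also $d(u,y)=d(v,y)$, with $u\neq v$. Applying the observation to each equality and discarding the alternative $u\equiv v$, we obtain $u+v\equiv 2x\pmod n$ and $u+v\equiv 2y\pmod n$, hence $2x\equiv 2y\pmod n$, i.e.\ $2(x-y)\equiv 0\pmod n$. Since $x$ and $y$ are adjacent, $x-y\equiv\pm 1\pmod n$, so this forces $n\mid 2$, which contradicts $n\geq 3$ (a cycle graph has at least three vertices). Therefore $d(u,y)\neq d(v,y)$.

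There is essentially no obstacle in this argument; the only delicate points are the clean formulation of the ``equidistance $\Leftrightarrow$ congruence'' observation (especially the even-cycle sub-case mentioned above) and making the hypothesis $u\neq v$ explicit. A more hands-on alternative would be to fix $u=x+k$, $v=x-k$ with $0<k<n/2$ (the only way two distinct vertices can be equidistant from $x$) and $y=x\pm1$, and then directly compute $d(u,y)$ and $d(v,y)$, splitting according to whether $2k+2\le n$; but the modular computation above is shorter and avoids the case split.
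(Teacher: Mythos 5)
The paper itself offers no proof of this claim --- it is dismissed as ``well-known'' and ``very easy to verify'' --- so there is nothing to compare your argument against; it stands or falls on its own. It stands. Your modular formulation is correct: the equivalence $d(a,c)=d(b,c)\iff a\equiv b$ or $a+b\equiv 2c\pmod n$ is proved carefully (including the antipodal case on even cycles), and combining it with $u\not\equiv v$ yields $2(x-y)\equiv 0\pmod n$, which is incompatible with $x-y\equiv\pm1$ and $n\ge 3$. You are also right to flag that the claim is literally false when $u=v$ and must be read with $u\neq v$ as an implicit hypothesis; that is indeed how the claim is used later in the paper (to separate two \emph{distinct} vertices of the cycle). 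The only stylistic remark is that the authors presumably had in mind the one-line geometric version of the same fact --- two distinct vertices equidistant from $x$ sit symmetrically about the axis through $x$, and an adjacent vertex $y$ lies off every such axis except the forbidden ones --- which is exactly your congruence $u+v\equiv 2x$ in disguise; your write-up simply makes the folklore precise.
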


\begin{corollary} \label{dimCiclo} For any cycle graph $C$,   $\dim(C)=2$.
\end{corollary}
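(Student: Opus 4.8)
The plan is to establish the two inequalities $\dim(C)\le 2$ and $\dim(C)\ge 2$ separately, with the upper bound being essentially immediate from Claim~\ref{distancescycles}.

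For the upper bound, I would fix any two adjacent vertices $x,y$ of $C$ and show that $S=\{x,y\}$ is a resolving set. Suppose, for the sake of contradiction, that there exist two distinct vertices $u,v$ with $r(u|S)=r(v|S)$, i.e., $d(u,x)=d(v,x)$ and $d(u,y)=d(v,y)$. Since $x$ and $y$ are adjacent and $d(u,x)=d(v,x)$, Claim~\ref{distancescycles} forces $d(u,y)\ne d(v,y)$, which contradicts the second equality. Hence $S$ resolves $C$, so $\dim(C)\le|S|=2$.

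For the lower bound, recall that a cycle graph $C=C_n$ has order $n\ge 3$, so every vertex $w$ has two distinct neighbours $w'$ and $w''$ with $d(w,w')=d(w,w'')=1$; thus $r(w'|\{w\})=r(w''|\{w\})$ and no singleton is a resolving set. Since $\dim(C)\ge 1$ trivially, we conclude $\dim(C)\ge 2$, and combining the two bounds gives $\dim(C)=2$.

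I do not expect a real obstacle here, since the substantive content has already been packaged into Claim~\ref{distancescycles}. The only points that deserve a line of care are the implicit hypothesis $u\ne v$ when invoking the claim (the case $u=v$ is vacuous for the definition of a resolving set) and the use of $n\ge 3$ to guarantee that $w$ genuinely has two distinct neighbours in the lower-bound step.
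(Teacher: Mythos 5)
Your proof is correct and follows exactly the route the paper intends: the corollary is stated without proof as an immediate consequence of Claim~\ref{distancescycles} (two adjacent vertices form a resolving set), together with the standard observation that no singleton resolves a cycle. Your explicit treatment of the lower bound and of the implicit hypothesis $u\ne v$ in the claim is a harmless elaboration of the same argument.
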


Any vertex adjacent to a pendant vertex of  a graph G is called a support vertex of $G$. Let $\rho(G)$ be the number of support vertices of $G$ adjacent to more than one pendant vertex.

\begin{theorem} Let  $G$ be a connected unicyclic graph. If every vertex belonging to the cycle of $G$ has degree greater than two,   then  $$\dim(G)\le n_1(G)-\rho(G).$$
\end{theorem}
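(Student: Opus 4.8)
The plan is to build a resolving set for $G$ directly, guided by the tree formula (\ref{bounDimUsandoExterior}) and (\ref{n1-ex-plus-one}). Let $C$ denote the unique cycle of $G$. Since every vertex of $C$ has degree at least $3$, each vertex $u$ of $C$ is a major vertex of $G$ and is the root of a (possibly trivial) collection of tree-branches hanging off $C$. For each support vertex $w$ of $G$ that is adjacent to $k_w \ge 2$ pendant vertices, I would put $k_w - 1$ of those pendant vertices into the resolving set and leave exactly one out; for every other pendant vertex of $G$ I would simply include it. This selects $n_1(G) - \rho(G)$ vertices, which is the claimed bound, so the whole task is to verify that this set $W$ resolves $G$.

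The verification would proceed branch-by-branch using the same local argument that underlies the tree formula: within a single branch attached at a vertex $u$ of $C$, the included pendant vertices of that branch, together with the knowledge of distances to the ``far'' side of the graph, already distinguish all vertices of that branch from one another (this is exactly the mechanism by which $n_1 - \ex$ landmarks resolve a tree, and deleting one pendant per multi-pendant support vertex is harmless precisely because its twin pendants are resolved by the retained ones via the support vertex). The genuinely new point compared with the tree case is distinguishing vertices that lie in \emph{different} branches, or on the cycle itself. Here I would invoke Claim~\ref{distancescycles} and Corollary~\ref{dimCiclo}: because every cycle vertex carries its own branch with at least one landmark in $W$, a vertex $x$ in the branch at $u$ and a vertex $y$ in the branch at $u' \ne u$ are separated by comparing distances to a landmark in $u$'s branch versus one in $u'$'s branch, while two cycle vertices — and a cycle vertex versus a branch vertex at a different attachment point — are separated using the two-landmark resolving behaviour of the cycle (any two landmarks lying in branches at two distinct, suitably chosen cycle vertices reproduce the effect of the two resolving vertices of $C$, after accounting for the fixed offsets introduced by the branch depths).

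The main obstacle I anticipate is the bookkeeping when a branch is attached at a cycle vertex whose ``antipodal'' region is symmetric: one must check that for any two vertices $x,y$ with $d(x,W)=d(y,W)$ componentwise, walking toward the cycle and applying Claim~\ref{distancescycles} to an appropriately chosen adjacent pair of cycle vertices forces a contradiction. I would handle this by a short case analysis on where $x$ and $y$ sit relative to $C$: (i) both in the same branch, (ii) in different branches, (iii) at least one on $C$. Cases (ii) and (iii) reduce to the cycle argument; case (i) reduces to the tree argument plus the twin-pendant observation. A secondary technical point is confirming that dropping one pendant per heavy support vertex never destroys resolution \emph{outside} that branch — but since that pendant's distance vector to $W$ is, up to a uniform additive constant along shortest paths leaving the branch, the same as its retained twin's, no pair of vertices elsewhere could have been relying on it, so this is routine.
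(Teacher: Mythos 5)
Your proposal is correct and follows essentially the same route as the paper: take as landmarks all pendant vertices except one per support vertex with multiple pendants (every cycle vertex still carries at least one landmark in its branch because it has degree at least $3$), resolve pairs of cycle vertices via Claim~\ref{distancescycles} applied to two landmarks whose closest cycle vertices are adjacent, and handle branch vertices by the usual tree mechanism plus the twin-pendant observation. The paper merely organizes this as ``all pendants resolve, then drop one per equivalence class,'' while you build the reduced set directly; the content is the same.
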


\begin{proof}
Let $C$ be the set of vertices belonging to the cycle of $G$. In order to show that the set of pendant vertices of $G$ is a resolving set, we only need to show that for every $u,v\in C$  we can find two pendant vertices, $x, y$,   such that if $d_G(u,x)=d_G(v,x)$, then $d_G(u,y)\neq d_G(v,y)$. To begin with, for every pendant vertex $w$  we define $w_c$ as the vertex of $C$ such that $d_G(w,w_c)=d_G(w,C)$.


We  take $x,y$ as two pendant vertices of $G$ such that $x_c$ and $y_c$ are adjacent vertices.
Note that in this case  for every $u,v\in C$ we have $d_G(u,x)=d_G(u,x_c)+d_G(x_c,x)$,  $d_G(u,y)=d_G(u,y_c)+d_G(y_c,y)$, $d_G(v,x)=d_G(v,x_c)+d_G(x_c,x)$ and $d_G(v,y)=d_G(v,y_c)+d_G(y_c,y)$. So, if $d_G(u,x)=d_G(v,x)$, we conclude $d_G(u,y)\neq d_G(v,y)$.
Thus, the set of pendant vertices of $G$ is a resolving set.


If we consider pendant vertices as being equivalent if they have the same support vertex, then a resolving set of minimum cardinality should contain all but one of these pendant vertices per equivalent class. Thus, the result follows.
\end{proof}

The above bound is tight, it is achieved for the graph in Figure \ref{1uniciclico}.

\begin{figure}[h]
 \centering
  \includegraphics[width=0.3\textwidth]{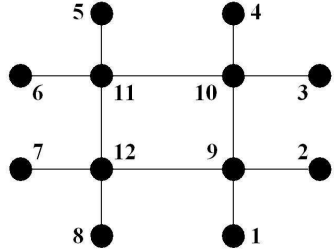}\\
  \caption{$\Pi=\{\{3,5,7,9\},\{1,6,8,10\},\{2,4,11\},\{12\}\}$ is a resolving partition and
$\{1,3,5,7\}$ is a
resolving set.}\label{1uniciclico}
\end{figure}

\begin{corollary}\label{coro-pd-varepsilon-rho}
Let  $G$ be a connected unicyclic graph. If every vertex belonging to the cycle of $G$ has degree greater than two,   then $$\pd(G)\le n_1(G)-\rho(G)+1.$$
\end{corollary}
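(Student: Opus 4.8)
The plan is to obtain the corollary as an immediate consequence of the preceding Theorem together with the general inequality (\ref{partition-dimension}). Under the stated hypothesis — every vertex on the cycle of $G$ has degree greater than two — the Theorem gives $\dim(G)\le n_1(G)-\rho(G)$. Combining this with $\pd(G)\le \dim(G)+1$, which holds for every nontrivial connected graph by (\ref{partition-dimension}), yields at once $\pd(G)\le n_1(G)-\rho(G)+1$, which is the assertion. So the core of the argument is a single chaining of two already-established bounds.

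For completeness, and to make transparent why the bound is attained by the graph in Figure \ref{1uniciclico}, I would also record the explicit resolving partition underlying (\ref{partition-dimension}) in this setting. Let $W$ be the resolving set built in the proof of the above Theorem, namely one pendant vertex from each class of pendant vertices sharing a support vertex that has at least two pendant neighbours, together with every pendant vertex that is the unique pendant neighbour of its support; then $|W|=n_1(G)-\rho(G)$. Set $\Pi=\{\{w\}:w\in W\}\cup\{V(G)\setminus W\}$. Since every cycle vertex of $G$ is non-pendant (its degree exceeds two), the block $V(G)\setminus W$ is nonempty, so $\Pi$ is a genuine partition with exactly $|W|+1=n_1(G)-\rho(G)+1$ parts. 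For any vertex $v$ the coordinate of $r(v|\Pi)$ indexed by a singleton $\{w_i\}$ equals $d(v,w_i)$, hence the singleton coordinates of $r(v|\Pi)$ reproduce the metric representation $r(v|W)$; as $W$ resolves $G$, two distinct vertices outside $W$ already get distinct representations, and each $w_i\in W$ is separated from every other vertex because its $i$-th coordinate is $0$ while theirs is positive. Thus $\Pi$ is a resolving partition of the required size.

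There is no genuine obstacle here: the only points requiring (routine) care are the verification that appending singleton blocks to a resolving set cannot destroy resolvability, and that the residual block is nonempty so that $\Pi$ truly has $|W|+1$ cells — both are handled as above using connectivity and the degree condition on the cycle. If one wishes, one may further note that the inequality need not be tight in general, since $\pd(G)$ can be strictly smaller than $\dim(G)+1$, whereas for the unicyclic graph of Figure \ref{1uniciclico} equality is achieved.
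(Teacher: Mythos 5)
Your proposal is correct and matches the paper's (implicit) argument: the corollary is exactly the preceding theorem's bound $\dim(G)\le n_1(G)-\rho(G)$ chained with the general inequality (\ref{partition-dimension}), $\pd(G)\le\dim(G)+1$. One small slip in your optional second paragraph: the resolving set $W$ from the theorem's proof keeps \emph{all but one} pendant vertex from each class sharing a support vertex with at least two pendant neighbours (not just one per class), which is what yields $|W|=n_1(G)-\rho(G)$; this does not affect your main argument.
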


Note that for the graph in Figure \ref{1uniciclico}, Corollary \ref{coro-pd-varepsilon-rho} leads to $\pd(G)\le 5$, while bound (\ref{partition-dimension1}) only gives $\pd(G)\le 6$.

In order to obtain other results we need to introduce some additional notations.
Let $S=\{s_1,s_2,...,s_{\kappa(G)}\}$ be the set of exterior major
vertices of the unicyclic graph $G$ with terminal degree greater than one.  For every $s_i\in S$, let $\{s_{i1},s_{i2},..., s_{il_i}\}$ be the set
of terminal vertices of $s_i$ and let $\tau(G)=\displaystyle\max_{i\in \{1,...,\kappa(G)\}}\{l_i\}$.

\begin{lemma}{\rm \cite{chartrand2}}  \label{pd(pn)=2}
 Let G be a connected graph of order $n\ge  2$. Then $pd(G) = 2$ if
and only if $G\cong P_n$.
\end{lemma}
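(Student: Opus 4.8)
The plan is to prove both implications directly from the definitions, using nothing beyond the distance function. For the ``if'' direction, suppose $G\cong P_n$ with $n\ge 2$, say $P_n=v_1v_2\cdots v_n$. I would take the two-block partition $\Pi=\{\{v_1\},\{v_2,\dots,v_n\}\}$ and compute $r(v_1|\Pi)=(0,1)$ and $r(v_i|\Pi)=(i-1,0)$ for $2\le i\le n$; the first coordinates are pairwise distinct, so $\Pi$ is resolving and $\pd(P_n)\le 2$. Since a single-block partition assigns every vertex the representation $(0)$, one has $\pd(G)=1$ only when $|V(G)|=1$; hence $\pd(P_n)\ge 2$, and therefore $\pd(P_n)=2$.

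For the ``only if'' direction, let $\Pi=\{P_1,P_2\}$ be a resolving partition of a connected graph $G$ of order $n\ge 2$ (both blocks nonempty, as $\Pi$ has exactly two sets). The first observation is that a vertex $v\in P_1$ satisfies $d(v,P_1)=0$, so it is determined by $d(v,P_2)$ alone; thus $v\mapsto d(v,P_2)$ is injective on $P_1$, and symmetrically $v\mapsto d(v,P_1)$ is injective on $P_2$. Next, let $k=|P_1|$ and let $v^\ast\in P_1$ maximize $d(\cdot,P_2)$; a shortest path from $v^\ast$ to $P_2$ meets a vertex at every distance $0,1,\dots,d(v^\ast,P_2)$ from $P_2$, and each such vertex at positive distance lies in $P_1$. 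Together with injectivity this forces $\{d(v,P_2):v\in P_1\}=\{1,2,\dots,k\}$, and likewise $\{d(v,P_1):v\in P_2\}=\{1,\dots,m\}$ with $m=|P_2|$. Write $x_j$ for the unique vertex of $P_1$ with $d(x_j,P_2)=j$ and $y_i$ for the unique vertex of $P_2$ with $d(y_i,P_1)=i$.

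The second step locates all edges. For $1\le j\le k-1$ the vertex $x_{j+1}$ has a neighbour at distance $j$ from $P_2$, which cannot lie in $P_2$ and hence equals $x_j$; so $x_1x_2\cdots x_k$ is a path, and symmetrically $y_1y_2\cdots y_m$ is a path. Moreover $x_1$ has a neighbour in $P_2$, necessarily at distance $1$ from $P_1$, hence equal to $y_1$, so $x_1\sim y_1$; thus $x_k\cdots x_1y_1\cdots y_m$ is a spanning path of $G$. Finally I would rule out any further edge: an edge $x_ix_j$ (resp.\ $y_iy_j$) would force $|i-j|\le 1$, since adjacent vertices have distances to $P_2$ (resp.\ to $P_1$) differing by at most $1$; and an edge $x_iy_j$ would force $i=1$ (as $x_i$ is then adjacent to $P_2$) and $j=1$ (as $y_j$ is then adjacent to $P_1$), i.e.\ it is the already present edge $x_1y_1$. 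Hence the edge set of $G$ is exactly that of the spanning path, so $G\cong P_n$.

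I expect the bookkeeping in the middle paragraph to be the delicate part: one must simultaneously use connectedness (via the shortest path from the farthest vertex, to see that every level $1,\dots,k$ is occupied) and injectivity of $d(\cdot,P_2)$ on $P_1$ (to see that no level is occupied twice), and keep the degenerate cases straight, before the clean labelling $x_1,\dots,x_k,y_1,\dots,y_m$ becomes available. Once that labelling is in place, reading off the edge set is a routine case distinction.
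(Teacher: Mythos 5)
Your proof is correct. Note that the paper itself offers no proof of this lemma: it is imported verbatim from the cited reference of Chartrand, Salehi and Zhang, so there is no in-paper argument to compare against. Your two directions are both sound: the ``if'' direction with the partition $\{\{v_1\},\{v_2,\dots,v_n\}\}$ is the standard witness, and your ``only if'' direction correctly exploits that every $v\in P_1$ has representation $(0,d(v,P_2))$ with $d(v,P_2)\ge 1$, so that injectivity of $d(\cdot,P_2)$ on $P_1$ together with the shortest path from the farthest vertex pins the value set down to exactly $\{1,\dots,|P_1|\}$; the subsequent identification of the edge set (consecutive $x_j$'s adjacent, $x_1\sim y_1$, and the Lipschitz property $|d(u,S)-d(w,S)|\le 1$ for adjacent $u,w$ excluding all other edges) is complete and handles the degenerate case $|P_1|=1$ without trouble. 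This is essentially the argument one finds in the original source, so you have supplied a correct, self-contained proof of a statement the paper merely cites.
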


\begin{theorem}\label{ThPdcycles}
Let $G$ be a connected unicyclic graph.
\begin{itemize}
\item[{\rm (i)}]If $G$ is a cycle graph or every exterior major vertex of $G$ has terminal degree  one, then $$\pd(G)=3.$$
\item[{\rm (ii)}]If $G$ contains at least an exterior major vertex of terminal degree greater than one, then  $$\pd(G)\le
\kappa(G)+\tau(G)+1.$$
\end{itemize}
\end{theorem}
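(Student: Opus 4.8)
The plan is to prove the two parts separately, building resolving partitions explicitly.

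For part (i), consider first a cycle $C_n$. By Lemma \ref{pd(pn)=2}, $\pd(C_n)\geq 3$ since $C_n\not\cong P_n$. To see $\pd(C_n)\leq 3$, I would label the cycle $v_1v_2\cdots v_nv_1$ and take $\Pi=\{\{v_1\},\{v_2\},V\setminus\{v_1,v_2\}\}$. Using Claim \ref{distancescycles} (with $x=v_1$, $y=v_2$): if two vertices $u,w$ have $d(u,v_1)=d(w,v_1)$ then $d(u,v_2)\neq d(w,v_2)$, so no two vertices in the third part collide; the vertices $v_1,v_2$ are trivially distinguished by the zero coordinate. Hence $\pd(C_n)=3$. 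Now suppose $G$ is unicyclic, not a cycle, and every exterior major vertex has terminal degree one. Then the bound (\ref{partition-dimension1}) gives an upper bound, but more directly: one shows $\dim(G)\leq 2$ is impossible while a careful three-part partition works. Actually the cleanest route is: since every branch hanging off the cycle is a path (terminal degree one forces each exterior major vertex to anchor a single terminal path, and internal vertices of those attached trees have no branching that creates two terminal vertices), pick an edge $xy$ on the cycle and set $P_1$ to contain $x$ together with everything on one ``side'', $P_2$ to contain $y$ similarly, $P_3$ the rest — the key is that two vertices with equal distance to $P_1$ lie on a path or symmetric position, and the cycle-distinguishing argument of Claim \ref{distancescycles} together with the fact that distances into the pendant paths are monotone finishes it. Since $G\not\cong P_n$, Lemma \ref{pd(pn)=2} gives $\pd(G)\geq 3$, so $\pd(G)=3$.

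For part (ii), I build a partition with $\kappa(G)+\tau(G)+1$ classes. Write $S=\{s_1,\dots,s_{\kappa(G)}\}$ for the exterior major vertices of terminal degree at least two, with terminal vertices $s_{i1},\dots,s_{il_i}$ and $\tau(G)=\max_i l_i$. The idea, modeled on the tree case, is to use $\tau(G)$ ``color'' classes $P_1,\dots,P_{\tau(G)}$ that will separately resolve the pendant paths attached at each $s_i$, one extra class to handle the cycle (playing the role of the $v_2$ vertex in part (i)), and the final class as the ``bulk'' $P_{\tau(G)+1}\cup\cdots$. Concretely: for each $s_i$, put the pendant path leading to $s_{ij}$ into class $P_j$ (so pendant vertices get spread across $P_1,\dots,P_{l_i}$), put all remaining vertices — the cycle minus one chosen vertex $z$, the non-terminal branches, and the support structure — into one class, and put $\{z\}$ (a suitably chosen cycle vertex, or a single pendant vertex adjacent to one exterior major vertex) as the remaining singleton class. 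One then verifies resolution: two vertices in the same pendant path of $s_i$ are separated because they have distinct distances to $P_j$ for the $j$ corresponding to a different terminal of $s_i$; two vertices in pendant paths of different $s_i,s_{i'}$ are separated by the distance vector to the family $P_1,\dots,P_{\tau(G)}$ combined with their distance to the bulk class; and two vertices on the cycle are separated by the singleton-plus-bulk argument exactly as in part (i) via Claim \ref{distancescycles}. A handful of boundary cases (vertices on non-terminal branches, the support vertices themselves) need the observation that their distance to the bulk class is $0$, which immediately distinguishes them from pendant-path vertices.

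The main obstacle I anticipate is the bookkeeping in part (ii) when several exterior major vertices have fewer than $\tau(G)$ terminals: then classes $P_{l_i+1},\dots,P_{\tau(G)}$ receive nothing from $s_i$, and one must check that a vertex deep in a short pendant path of $s_i$ is still separated from a vertex at a matching depth in a pendant path of $s_{i'}$, using the distances to the classes that $s_{i'}$'s terminals do occupy — this works because the \emph{pattern} of which coordinates are ``large'' differs, but writing it carefully requires tracking, for a vertex $v$ in the $j$-th pendant path of $s_i$, that $d(v,P_k)$ equals (distance from $v$ to $s_i$) plus (distance from $s_i$ to the nearest vertex of $P_k$) whenever $P_k$ contributes nothing inside that path, and isolating the one coordinate $k=j$ where it is strictly smaller. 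The interaction between the cycle-resolving singleton and the pendant-path-resolving classes is the delicate point, and I would handle it by choosing the singleton class to be a cycle vertex that is \emph{not} adjacent to any $s_i$ when possible, so the two mechanisms decouple; the degenerate small cases where this is impossible would be checked directly.
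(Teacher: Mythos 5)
There are two genuine gaps, one in each part.

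\textbf{Part (i), non-cycle case.} For the cycle itself your argument is fine (and matches the paper's route via $\dim(C)=2$). But for a unicyclic graph in which every exterior major vertex has terminal degree one, you never actually specify a partition: ``$P_1$ contains $x$ together with everything on one side, $P_2$ contains $y$ similarly, $P_3$ the rest'' leaves $P_3$ essentially empty, and the natural fallback $\{\{x\},\{y\},V\setminus\{x,y\}\}$ fails. Indeed, take $p$ at depth $d$ on the pendant path at $c_i$ and $q$ at depth $d'$ on the pendant path at $c_j$ with $c_i,c_j$ on the same side of the edge $xy$; then $d(c_i,x)-d(c_i,y)=d(c_j,x)-d(c_j,y)$, so $d(p,x)=d(q,x)$ forces $d(p,y)=d(q,y)$ and Claim~\ref{distancescycles} does not rescue you. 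The paper's construction is genuinely different: it takes entire pendant paths as parts, placed (roughly) antipodally on the cycle --- $\{W_0,\,W_{k/2}\cup W_{k/2+1},\,\text{rest}\}$ for $k$ even and $\{W_0\cup W_1,\,W_{\lfloor k/2\rfloor}\cup W_{\lceil k/2\rceil},\,\text{rest}\}$ for $k$ odd --- and the verification is a multi-case distance computation that your sketch does not replace.

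\textbf{Part (ii).} You announce $\kappa(G)+\tau(G)+1$ classes but the partition you describe has only $\tau(G)+2$: the classes $P_1,\dots,P_{\tau(G)}$ with $P_j=\bigcup_i S_{ij}$, one bulk class, and one singleton $\{z\}$. This is not a harmless strengthening; it breaks. Take $x\in S_{ij}$ and $y\in S_{i'j}$ at the same depth $d$ from $s_i$ and $s_{i'}$ with $l_i=l_{i'}$. Then $d(x,P_j)=d(y,P_j)=0$, $d(x,P_{j'})=d+1=d(y,P_{j'})$ for every other $j'$, $d(x,\text{bulk})=d=d(y,\text{bulk})$, and $d(x,z)=d+d(s_i,z)$, $d(y,z)=d+d(s_{i'},z)$, which coincide whenever $s_i$ and $s_{i'}$ are equidistant from $z$ --- unavoidable on a cycle with several exterior major vertices (e.g.\ the paper's example of $C_k$ with $k$ pendants at every cycle vertex). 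This is exactly why the paper keeps the $\kappa(G)$ sets $A_i=S_{i1}$ as \emph{separate} classes and only merges $S_{ij}$ across $i$ for $j\ge 2$: then $d(x,A_i)=d(x,s_i)+1$ while $d(y,A_i)=d(y,s_{i'})+d(s_{i'},s_i)+1$, which settles this case. Your ``choose $z$ not adjacent to any $s_i$'' remedy does not address this symmetry obstruction, so the construction as written does not prove the stated bound.
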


\begin{proof}
Let us prove (i). If $G$ is a cycle graph, then  by (\ref{dimensionUniTrees}), Corollary \ref{dimCiclo} and Lemma \ref{pd(pn)=2}  we obtain $\pd(G)=3.$
 Now we consider that every exterior major vertex of $G=(V,E)$ has terminal degree  one. Notice that every exterior  major vertex  $u$  has degree three and it belongs to the cycle $C$ of $G$.
Let $\{c_0,c_1,...,c_{k-1}\}$ be the set of vertices of $G$ belonging to $C$ where $c_i$ and $c_{i+1}$ are adjacent (the subscripts are taken module  $k$). Without loss of generality we can suppose $c_0$ has terminal degree one. For every  exterior major vertex $c_i$, $W_i$ will denote the set of vertices belonging to the path starting at $c_i$ and ending at its terminal vertex. For every $c_j$ of degree two we assume $W_j=\{c_j\}$.

For  $k$ even we claim that $\Pi=\{W_0,A_2,A_3\}$ is a resolving partition for $G$, where $A_2=W_{\frac{k}{2}}\cup W_{\frac{k}{2} +1}$ and $A_3=V-\left(W_0\cup W_{\frac{k}{2}}\cup W_{ \frac{k}{2} +1}\right)$. To show this we differentiate three cases for $x,y\in V$.
\\[1ex]
Case 1: $x,y\in W_0$. Since  $d(x,c_0)\ne d(y,c_0)$, we conclude $d(x,A_3)=d(x,c_0)+1\ne d(y,c_0)+1= d(y,A_3)$.
\\[1ex]
Case 2: $x,y\in A_2$. If $d\left(x,c_{_{ \frac{k}{2} -1}}\right)=d\left(y,c_{ _{\frac{k}{2} -1}}\right)$, then either
$$d\left(x,c_{_{ \frac{k}{2} +2}}\right)=d\left(x,c_{_{ \frac{k}{2} -1}}\right)-1  = d\left(y,c_{_{ \frac{k}{2} -1}}\right)-1= d\left(y,c_{_{ \frac{k}{2} +2}}\right)-2$$
 or $$d\left(x,c_{_{ \frac{k}{2} +2}}\right)=d\left(x,c_{_{ \frac{k}{2} -1}}\right)+1  = d\left(y,c_{_{ \frac{k}{2} -1}}\right)+1= d\left(y,c_{_{ \frac{k}{2} +2}}\right)+2.$$ Thus, since $c_{_{ \frac{k}{2} +2}}\in A_3$ for $k\ge 6$ and  $c_{_{ \frac{k}{2} +2}}\in W_0$ for $k=4$, we have $d\left(x,A_3\right)\ne d\left(y,A_3\right)$ or $d\left(x,W_0\right)\ne d\left(y,W_0\right)$. On the other hand, since $c_{_{\frac{k}{2} -1}}\in A_3$, if $d\left(x,c_{_{ \frac{k}{2} -1}}\right)\ne d\left(y,c_{_{ \frac{k}{2} -1}}\right)$ and $d\left(x,c_{_{ \frac{k}{2} +1}}\right)= d\left(y,c_{_{ \frac{k}{2} +1}}\right)$, then $d\left(x,A_3\right)\ne d\left(y,A_3\right)$. In the case  $d\left(x,c_{_{ \frac{k}{2} +1}}\right)\ne d\left(y,c_{_{ \frac{k}{2} +1}}\right)$ we have $d\left(x,W_0\right)=d\left(x,c_{_{ \frac{k}{2} +1}}\right)+d\left(c_{_{ \frac{k}{2} +1}},c_0\right)\ne d\left(y,c_{_{ \frac{k}{2} +1}}\right)+d\left(c_{_{ \frac{k}{2} +1}},c_0\right)= d\left(y,W_0\right)$.
\\[1ex]
Case 3: $x,y\in  A_3$. Let $x\in W_i$ and $y\in W_j$. If $i=j$, then $d(x,W_0)\ne d(y,W_0)$ and $d(x,A_2)\ne d(y,A_2)$. Now we consider the next cases.
\\[1ex]
Case 3.1: $0<i<j<k/2$. If $d(y,A_2)=d(x,A_2)$, then we have $d(y,c_j)+d(c_j,c_{k/2})=d(y,c_{k/2})=d(x,c_{k/2})=d(x,c_{i})+d(c_i,c_{j})+d(c_j,c_{k/2})$. So, $d(y,c_j)=d(x,c_{i})+d(c_i,c_{j})$ and we obtain the following.
$$d(x,c_0)=d(x,c_i)+d(c_i,c_0)=d(y,c_j)-d(c_i,c_{j})+d(c_i,c_0)\ne d(y,c_j)+d(c_j,c_i)+d(c_i,c_0)=d(y,c_0).$$
Thus, $d(x,W_0)\ne d(y,W_0)$.
\\[1ex]
Case 3.2: $\frac{k}{2}+1<i<j\le k-1$. Proceeding analogously to Case 3.1, if $d(y,A_2)=d(x,A_2)$, then we obtain that $d(x,W_0)\ne d(y,W_0)$.
\\[1ex]
Case 3.3: $0<i<k/2$ and $\frac{k}{2}+1<j\le k-1$. If $d(x,A_2)=d(y,A_2)$, then we have $d(x,c_{i})+d(c_i,c_{k/2})=d(x,c_{k/2})=d(y,c_{k/2+1})=d(y,c_j)+d(c_j,c_{\frac{k}{2}+1})$. Thus,
\begin{align*}
d(x,c_0)&=d(x,c_i)+d(c_i,c_0)\\
&=d(y,c_j)+d(c_j,c_{\frac{k}{2}+1})-d(c_i,c_{k/2})+d(c_i,c_0)\\
&=d(y,c_j)+d(c_0,c_{\frac{k}{2}+1})-d(c_0,c_j)-d(c_i,c_{k/2})+d(c_i,c_0)\\
&=d(y,c_j)+d(c_0,c_j)+d(c_0,c_{\frac{k}{2}+1})-2d(c_0,c_j)-d(c_i,c_{k/2})+d(c_i,c_0)\\
&=d(y,c_0)+\left(\frac{k}{2}-1\right)-2(k-j)-\left(\frac{k}{2}-i\right)+i\\
&=d(y,c_0)+2(i+j)-2k-1.
\end{align*}
Hence, if $i+j\le k$, then $2(i+j)-2k-1<0$ and, as a consequence,  $d(x,c_0)<d(y,c_0)$. Analogously, if $i+j\ge k+1$, then $2(i+j)-2k-1>0$, so we have $d(x,c_0)>d(y,c_0)$. As a result,  $d(x,W_0)\ne d(y,W_0)$.

On the other hand, suppose $k$ is odd. If $k=3$, then it is straightforward to check that $\{W_0,W_1,W_2\}$ is a resolving partition for $G$. So we assume $k\ge 5$ and we claim that $\Pi=\{B_1,B_2,B_3\}$ is a resolving partition for $G$, where $B_1=W_0\cup W_1$, $B_2=W_{\lfloor k/2 \rfloor}\cup W_{\lceil k/2 \rceil}$ and $B_3=V-(B_1\cup B_2)$. To show this we consider two different vertices $x,y\in V$ and as above we take $x\in W_i$ and $y\in W_j$. If $i=j$, then $x,y\in B_l$ for some $l\in\{1,2,3\}$ and $d(x,B_r)\ne d(y,B_r)$ for any $r\in \{1,2,3\}-\{l\}$. Now on we assume $i<j$ and we differentiate the following three cases.
\\[1ex]
Case 1': $x,y\in B_1$. Since $i<j$ and $B_1=W_0\cup W_1$ we have $i=0$ and $j=1$. If $k=5$, then $d(x,B_3)=d(y,B_3)$ implies $d(x,B_2)=d(y,B_2)+2$. So we consider $k\ge 7$. Now $d(x,B_3)=d(y,B_3)$ implies $d(x,c_0)=d(y,c_1)$. Thus,
\begin{align*}
d(x,c_{\lceil k/2 \rceil})&=d(x,c_0)+d(c_0,c_{\lceil k/2 \rceil})\\
&=d(x,c_0)+d(c_0,c_{\lfloor k/2 \rfloor})\\
&=d(x,c_0)+d(c_1,c_{\lfloor k/2 \rfloor})+1\\
&=d(y,c_1)+d(c_1,c_{\lfloor k/2 \rfloor})+1\\
&=d(y,c_{\lfloor k/2 \rfloor})+1\\
&>d(y,c_{\lfloor k/2 \rfloor}).
\end{align*}
Hence, we obtain that $d(x,B_2)\ne d(y,B_2)$.
\\[1ex]
Case 2': $x,y\in B_2$. Proceeding analogously to Case 1' we obtain that if $d(x,B_3)=d(y,B_3)$, then $d(x,B_2)\ne d(y,B_2)$.
\\[1ex]
Case 3': $x,y\in  B_3$. Now we consider the following cases.
\\[1ex]
Case 3'.1: $1<i<j<\lfloor k/2 \rfloor$. If $d(y,B_2)=d(x,B_2)$, then we have $d(y,c_j)+d(c_j,c_{\lfloor k/2 \rfloor})=d(y,c_{\lfloor k/2 \rfloor})=d(x,c_{\lfloor k/2 \rfloor})=d(x,c_{i})+d(c_i,c_{j})+d(c_j,c_{\lfloor k/2 \rfloor})$. So, $d(y,c_j)=d(x,c_{i})+d(c_i,c_{j})$ and we obtain
\begin{align*} d(y,c_1)&=d(y,c_j)+d(c_j,c_i)+d(c_i,c_1)\\
&=d(x,c_i)+2d(c_j,c_i)+d(c_i,c_1)\\
&=d(x,c_1)+2d(c_j,c_i).
\end{align*}
Thus, $d(x,B_1)\ne d(y,B_1)$.
\\[1ex]
Case 3'.2: $\lceil k/2 \rceil<i<j\le k-1$. Proceeding as in Case 3'.1 we have that if $d(y,B_2)=d(x,B_2)$, then we obtain that $d(x,B_1)\ne d(y,B_1)$.
\\[1ex]
Case 3'.3: $1<i<\lfloor k/2 \rfloor$ and $\lceil k/2 \rceil<j\le k-1$.
If $d(x,B_2)=d(y,B_2)$, then we have  $d(x,c_{i})+d(c_i,c_{\lfloor k/2 \rfloor})=d(x,c_{\lfloor k/2 \rfloor})=d(y,c_{\lceil k/2 \rceil})=d(y,c_j)+d(c_j,c_{\lceil k/2 \rceil})$. Thus,
\begin{align*}
d(x,c_1)&=d(x,c_i)+d(c_i,c_1)\\
&=d(y,c_j)+d(c_j,c_{\lceil k/2 \rceil})-d(c_i,c_{\lfloor k/2 \rfloor})+d(c_i,c_1)\\
&=d(y,c_j)+d(c_0,c_{\lceil k/2 \rceil})-d(c_0,c_j)-d(c_i,c_{\lfloor k/2 \rfloor})+d(c_i,c_1)\\
&=d(y,c_j)+d(c_0,c_j)+d(c_0,c_{\lceil k/2 \rceil})-2d(c_0,c_j)-d(c_i,c_{\lfloor k/2 \rfloor})+d(c_i,c_1)\\
&=d(y,c_0)+\lfloor k/2 \rfloor-2(k-j)-\left(\lfloor k/2 \rfloor-i\right)+(i-1)\\
&=d(y,c_0)+2(i+j-k)-1.
\end{align*}
Hence, if $i+j\le k$, then $2(i+j-k)-1<0$ and, as a consequence,  $d(x,c_1)<d(y,c_0)$. Analogously, if $i+j\ge k+1$, then $2(i+j-k)-1>0$, so we have $d(x,c_1)>d(y,c_0)$. As a result, $d(x,B_1)\ne d(y,B_1)$.

Therefore, for every $x,y\in V$, $x\ne y$, we have $r(x|\Pi)\ne r(y|\Pi)$ and, as a consequence, $\pd(G)\le 3$. By Lemma \ref{pd(pn)=2} we know that for every graph $G$ different from a path we have $\pd(G)\ge 3$,  hence we obtain  $\pd(G)=3$.

Now, let us prove  (ii).
Let $s_l\in S$ be an arbitrary exterior major vertex of $G=(V,E)$, with terminal degree greater than one. Let $u\in V$ be the vertex of the cycle $C$ in $G$, such that $d(u,s_l)=\min_{v\in C}\{d(v,s_l)\}$. Let $v\in C$ such that $u$ is adjacent to $v$. For a terminal vertex
$s_{ij}$ of an exterior major vertex $s_i$ we denote by $S_{ij}$ the set of vertices of $G$, different from $s_i$, belonging to the $s_i-s_{ij}$ path. If $l_i< \tau(G)$, we assume $S_{ij}=\emptyset$ for every $j\in \{l_i+1,...,\tau(G)\}$.
Now, let $A=\{v\}$ and $B=C-\{v\}$. Let $A_i=S_{i1}$, for every $i\in \{1,...,\kappa(G)\}$ and if $\tau(G)\ge 3$, then let $B_j=\bigcup_{i=1}^{\kappa(G)}S_{ij}$, for every $j\in \{2,...,\tau(G)-1\}$. Now we will show that the partition $\Pi=\{A, B, A_1,A_2,....,A_{\kappa(G)},B_2,B_3,...,B_{\tau(G)-1},R\}$, with $R=V(G)-A-B-\bigcup_{i=1}^{\kappa(G)}A_i-\bigcup_{i=2}^{\tau(G)-1}B_i$, is a resolving partition for $G$. Notice that the sets $B_j$ could not exist in the case $\tau(G)=2$. Hence, $R$ collects all major vertices of terminal degree one and the attached terminals.
 Let $x,y\in V$ be two different vertices in $G$. We have the following cases.
\\[1ex]
Case 1: If $x,y\in A_i$, then $d(x, R)\ne d(y,R)$. Namely, any path from $x$ or $y$ to $R$ must contain $s_i$.
\\[1ex]
Case 2: $x,y\in B$. If $d(x,v)\ne d(y,v)$, then $d(x,A)\ne d(y,A)$. On the contrary, if $d(x,v)=d(y,v)$, then $d(x,u)\ne d(y,u)$ due to Claim~\ref{distancescycles}. So, for $s_l\in S$ we have $A_l=S_{l1}$ and we obtain that
$$d(x,A_l)=d(x,u)+d(u,S_{l1})\ne d(y,u)+d(u,S_{l1})=d(y,A_l).$$
\noindent
Case 3: $x,y\in B_j$. If $x,y\in S_{ij}$, then $x$ belongs to the $y-s_i$ path or $y$ belongs to the $x-s_i$ path. In both cases we have $d(x,A_i)=d(x,s_i)+1\ne d(y,s_i)+1=d(y,A_i)$. On the contrary, if $x\in S_{ij}$ and $y\in S_{kj}$, $i\ne k$, then let us suppose $d(x,A_i)=d(y,A_i)$. So, we have
\begin{align*}
d(x,A_k)&=d(x,s_i)+d(s_i,s_k)+1\\
&=d(x,A_i)+d(s_i,s_k)\\
&=d(y,A_i)+d(s_i,s_k)\\
&=d(y,s_k)+2d(s_i,s_k)+1\\
&=d(y,A_k)+2d(s_i,s_k)\\
&>d(y,A_k).
\end{align*}
\noindent
Case 4: $x,y\in R$. Let $a,b\in C$ such that $d(x,a)=\min_{c\in C}\{d(x,c)\}$ and $d(y,b)=\min_{c\in C}\{d(y,c)\}$. If $d(x,a)\ne d(y,b)$ and $a,b\ne v$, then $d(x,B)\ne d(y,B)$. Also, if $d(x,a)\ne d(y,b)$ and ($a=v$ or $b=v$), then we have either $d(x,A)\ne d(y,A)$ or $d(x,B)\ne d(y,B)$. Now, let us suppose $d(x,a)=d(y,b)$. We have the following cases.
\\[1ex]
Subcase 4.1: $a=b$. Hence, we consider a terminal vertex $s_{i1}$, such that $d(x,s_{i1})+d(y,s_{i1})=\min_{l\in \{1,...,\kappa(G)\}}\{d(x,s_{l1})+d(y,s_{l1})\}$. Let the vertices $c,d$ belonging to the $a-s_{i1}$ path $P$, with $d(x,c)=\min_{w\in P}\{d(x,w)\}$ and $d(y,d)=\min_{w\in P}\{d(y,w)\}$. If $c=d$, then there exists a terminal vertex $s_{j1}$ such that either $x$ belongs to the $y-s_{j1}$ path or $y$ belongs to the $x-s_{j1}$ path and we have either $d(x,A_j)<d(y,A_j)$  or $d(y,A_j)<d(x,A_j)$. If there exists not such a terminal vertex $s_{j1}$, then we have that $x\in S_{i\tau(G)}$ and $y\in S_{j\tau(G)}$ for some $i\ne j$. Thus we have the following.
\begin{align*}
d(x,A_i)&=d(x,s_i)+1\\
&=d(x,a)-d(s_i,a)+1\\
&=d(y,a)-d(s_i,a)+1\\
&=d(y,a)+d(a,s_i)-2d(s_i,a)+1\\
&=d(y,A_i)-2d(s_i,a)\\
&<d(y,A_i).
\end{align*}
\noindent
On the other hand, if $c\ne d$, then we have either, ($d(x,a)=d(x,c)+d(c,d)+d(d,a)$ and $d(y,a)=d(y,d)+d(d,a)$) or ($d(y,a)=d(y,d)+d(d,c)+d(c,a)$ and $d(x,a)=d(x,c)+d(c,a)$). Let us suppose, without loss of generality, $d(x,a)=d(x,c)+d(c,d)+d(d,a)$ and $d(y,a)=d(y,d)+d(d,a)$.  Thus, we have
\begin{align*}
d(x,A_i)&=d(x,c)+d(c,A_i)\\
&=d(x,a)-d(c,d)-d(d,a)+d(c,A_i)\\
&=d(y,a)-d(c,d)-d(d,a)+d(c,A_i)\\
&=d(y,d)+d(d,a)-d(c,d)-d(d,a)+d(c,A_i)\\
&=d(y,d)-d(c,d)+d(c,A_i)\\
&=d(y,d)+d(d,c)+d(c,A_i)-2d(c,d)\\
&=d(y,A_i)-2d(c,d)\\
&<d(y,A_i).
\end{align*}
\noindent
Subcase 4.2: $a\ne b$. If $a=u$ or $b=u$, then let us suppose, for instance $b=u$. Let $Q$ be a shortest path between $a$ and $s_{l1}$. Let $c$ belonging to $Q$, such that $d(y,c)$ is the minimum value between the distances from $y$ to any vertex of $Q$. So, we have
\begin{align*}
d(x,A_l)&=d(x,a)+d(a,u)+d(u,c)+d(c,A_l)\\
&=d(y,b)+d(a,u)+d(u,c)+d(c,A_l)\\
&=d(y,c)+d(c,u)+d(a,u)+d(u,c)+d(c,A_l)\\
&=d(y,A_l)+2d(c,u)+d(a,u)\\
&>d(y,A_l).
\end{align*}
Now, let us suppose $a\ne u$ and $b\ne u$. If $d(a,v)\ne d(b,v)$, then $d(x,A)\ne d(y,A)$. On the contrary, if $d(a,v)=d(b,v)$, then $d(a,u)\ne d(b,u)$ (Claim~\ref{distancescycles}). So, we have
\begin{align*}
d(x,A_l)&=d(x,a)+d(a,u)+d(u,A_l)\\
&=d(y,b)+d(a,u)+d(u,A_l)\\
&\ne d(y,b)+d(b,u)+d(u,A_l)\\
&=d(y,A_l).
\end{align*}

Therefore, for every different vertices $x,y\in V$ we have $r(x|\Pi)\ne r(y|\Pi)$ and $\Pi$ is a resolving partition for $G$ and, as a consequence, (ii) follows.
\end{proof}

\noindent
\textbf{Example}. Let $G$ be a graph obtained in the following way: we begin with a cycle $C_k$, $k\ge 4$, and, then for each vertex $v$ of the cycle we add $k$ vertices $v_1,v_2,...,v_k$ and edges $vv_{i}, 1\leq i\leq k$. Thus, $G$ has $k^2$ vertices of degree one and $k$ exterior major vertices of terminal degree $k$.
Notice that the above bound leads to $\pd(G)\le 2k+1$ while (\ref{partition-dimension1}) gives $\pd(G)\le k^2-k+2$ and Corollary \ref{coro-pd-varepsilon-rho} gives $\pd(G)\le k^2-k+1$.


For a connected unicyclic graph $G$, let $\varepsilon (G)$ the minimum number of leaves in any spanning tree of $G$, i.e.,
$$\varepsilon(G)=\displaystyle\min_{T\in {\cal T}(G)}\left\{n_1(T)\right\}.$$
Now, for a spanning tree $T\in \mathcal{T}(G)$, let $\kappa(T)$ be the number of exterior major vertices of $T$, with terminal degree greater than one and let $\tau(T)$ be the maximum terminal degree of any exterior major vertex of $T$. Note that $\kappa(G)\le \kappa(T)$ and $\tau(G)\le \tau(T)$.

\begin{corollary} \label{corollaryKappaTauUnicyclic}
Let $G$ be a connected  unicyclic graph. For every  $T\in \mathcal{T}(G)$ such that $\varepsilon(G)=n_1(T)$,  $$\pd(G)\le
\kappa(T)+\tau(T)+1.$$
\end{corollary}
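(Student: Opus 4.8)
The plan is to deduce Corollary~\ref{corollaryKappaTauUnicyclic} from Theorem~\ref{ThPdcycles} applied not to $G$ itself but to a carefully chosen spanning tree, and to bridge the gap between the partition dimensions of $G$ and of that tree. First I would fix a spanning tree $T\in\mathcal{T}(G)$ attaining $\varepsilon(G)=n_1(T)$, obtained from $G$ by deleting exactly one edge $e=c_ic_{i+1}$ of the unique cycle $C$. The point is that a resolving partition of $T$ need not remain resolving in $G$, because adding back the edge $e$ shrinks some distances; so one cannot simply invoke $\pd(G)\le\pd(T)$ (which is false in general). Instead, I would revisit the construction in the proof of Theorem~\ref{ThPdcycles}(ii): that argument builds an explicit resolving partition using the exterior major vertices of terminal degree $>1$ and their terminal paths, and it is stated for unicyclic graphs. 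The observation is that the \emph{same} partition sets, defined now from the exterior major vertices of the spanning tree $T$ (there are $\kappa(T)$ of them with terminal degree $>1$, with maximum terminal degree $\tau(T)$), together with the cycle-handling sets $A=\{v\}$ and $B=C-\{v\}$ coming from the one reinstated cycle edge, still resolve $G$.

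The key steps, in order, would be: (1) Record that every exterior major vertex of $G$ of terminal degree $>1$ remains an exterior major vertex of $T$ of terminal degree $\ge$ its terminal degree in $G$, and that any new leaves created by deleting $e$ (the endpoints $c_i,c_{i+1}$, if they had degree $2$ in $G$) are pendant in $T$ and must be absorbed into the partition; since $n_1(T)=\varepsilon(G)$, the number $\kappa(T)$ and value $\tau(T)$ control all terminal paths of $T$. (2) Re-run the four-case analysis of Theorem~\ref{ThPdcycles}(ii) verbatim, but reading ``cycle $C$'', ``exterior major vertex'', ``terminal vertex $s_{ij}$'', and the sets $S_{ij}$, $A_i$, $B_j$, $R$ with respect to their meaning in the unicyclic graph $G$ where the relevant branching structure is exactly that of $T$ — the cycle $C$ of $G$ is still present and Claim~\ref{distancescycles} still applies to it, while the tree-part distances used in Cases 1, 3 and 4 are unchanged because those shortest paths in $G$ never use a cycle chord beyond what Case 2 and Subcase 4.2 already handle. (3) Count: the partition is $\{A,B,A_1,\dots,A_{\kappa(T)},B_2,\dots,B_{\tau(T)-1},R\}$, which has $2+\kappa(T)+(\tau(T)-2)+1=\kappa(T)+\tau(T)+1$ parts, giving the bound.

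Alternatively — and this is cleaner — I would argue that Theorem~\ref{ThPdcycles}(ii) applied directly to $G$ already gives $\pd(G)\le\kappa(G)+\tau(G)+1$, and then simply cite the inequalities $\kappa(G)\le\kappa(T)$ and $\tau(G)\le\tau(T)$ noted just before the corollary statement to conclude $\pd(G)\le\kappa(G)+\tau(G)+1\le\kappa(T)+\tau(T)+1$. If $G$ falls under case (i) of Theorem~\ref{ThPdcycles} (a cycle, or every exterior major vertex has terminal degree one), then $\pd(G)=3$ and one still has $3\le\kappa(T)+\tau(T)+1$ whenever $T$ has an exterior major vertex of terminal degree $\ge2$; in the remaining situation $T$ itself has all exterior major vertices of terminal degree one and $G$ is essentially a tree-like attachment on a cycle, where a direct check (or Theorem~\ref{ThPdcycles}(i)) again gives $\pd(G)=3$, and the convention $\kappa(T)+\tau(T)+1$ must be interpreted as for the corresponding tree bound. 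I expect the main obstacle to be precisely this bookkeeping at the boundary — making sure the statement is meaningful and correct when $T$ has no exterior major vertex of terminal degree greater than one, and when deleting the chosen cycle edge turns a degree-$2$ cycle vertex into a new pendant vertex that must be accommodated; once the monotonicity $\kappa(G)\le\kappa(T)$, $\tau(G)\le\tau(T)$ is in hand, the rest is immediate from Theorem~\ref{ThPdcycles}.
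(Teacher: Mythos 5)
Your second, ``cleaner'' argument is exactly the paper's (implicit) proof: the corollary is presented as an immediate consequence of Theorem~\ref{ThPdcycles}(ii) combined with the inequalities $\kappa(G)\le\kappa(T)$ and $\tau(G)\le\tau(T)$ noted just before its statement, so the elaborate re-run of the partition construction on $T$ in your first two paragraphs is unnecessary. Your bookkeeping at the boundary (when $G$ falls under Theorem~\ref{ThPdcycles}(i) rather than (ii)) is in fact more careful than the paper, which supplies no proof and does not address that degenerate case.
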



For the unicyclic graph $G$ and a spanning tree $T\in \mathcal{T}(G)$, let $\xi(T)$ be the number of support vertices of $T$ and  $\theta(T)$ be  the maximum number of leaves adjacent to any support vertex of $T$. As a consequence of the above corollary we obtain the following result.

\begin{remark}
If $T$ is a spanning tree of a unicyclic graph $G$ such that $\varepsilon(G)=n_1(T)$, then $$\pd(G)\le \xi(T)+\theta(T)+1.$$
\end{remark}

\begin{proof}
If $T$ is a path, then $\xi(T)=2$ and $\theta(T)=1$, so the result follows. Now we suppose $T$ is not a path.
Let $v$ be an exterior major vertex of terminal degree $\tau(T)$ in $T$. Let $x$ be the number of leaves of $T$ adjacent to $v$ and let $y=\tau(T)-x$. Since $\kappa(T)+y\le \xi(T)$ and $x\le \theta(T)$, we deduce $\kappa(T)+\tau(T)\le \xi(T)+\theta(T)$. Thus the result follows from Corollary \ref{corollaryKappaTauUnicyclic}.
\end{proof}

As the next theorem shows, the above result can be improved.

\begin{theorem}\label{ThThetaXiUnicyclic}
If $T$ is a spanning tree of a unicyclic graph $G$ such that $\varepsilon(G)=n_1(T)$, then $$\theta(T)-1\le \pd(G)\le \xi(T)+\theta(T).$$
\end{theorem}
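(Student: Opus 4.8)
The plan is to prove the two bounds separately; the lower bound is short, while the upper bound needs an explicit resolving partition together with a case analysis.

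For the lower bound $\pd(G)\ge\theta(T)-1$, I would start from a support vertex $v$ of $T$ with $\theta:=\theta(T)$ leaf-neighbours $u_1,\dots,u_\theta$ in $T$ and from the unique edge $e\in E(G)\setminus E(T)$. First I would show that at most one $u_i$ is incident with $e$: if $e=u_1u_2$ for two of them, then $G$ contains the triangle $u_1vu_2$ and the spanning tree $T':=G-vu_2$ satisfies $n_1(T')<n_1(T)=\varepsilon(G)$ — the vertex $u_1$ stops being a leaf and no new leaf is created, since $v$ could become a leaf only if $\deg_T(v)=2$, i.e.\ $T=P_3$ and $G=C_3$, a trivial case — contradicting the minimality of $n_1(T)$. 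Hence at least $\theta-1$ of the $u_i$ remain leaves of $G$ adjacent to $v$, and for any two of them $u,u'$ lying in a common class $P_k$ of a resolving partition $\Pi$ one has $d(u,P_j)=1+d(v,P_j)=d(u',P_j)$ for $j\neq k$ and $d(u,P_k)=0=d(u',P_k)$, so $r(u\,|\,\Pi)=r(u'\,|\,\Pi)$; thus these $\ge\theta-1$ leaves lie in pairwise distinct classes, giving $\pd(G)\ge\theta(T)-1$.

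For the upper bound I would first dispose of the case $T=P_n$: there $\xi(T)=2$, $\theta(T)=1$, and a short case distinction on the endpoints of $e$ in $G=P_n+e$ shows that $G$ is a cycle or that every exterior major vertex of $G$ has terminal degree one, so Theorem \ref{ThPdcycles}(i) yields $\pd(G)=3=\xi(T)+\theta(T)$. When $T$ is not a path, so $\xi:=\xi(T)\ge 2$, the plan is to construct a resolving partition of $G$ with exactly $\xi+\theta$ classes of two kinds: $\theta$ \emph{leaf classes} $F_1,\dots,F_\theta$, into which the leaves of each support vertex are distributed injectively (possible because no support vertex of $T$ has more than $\theta$ leaves), and $\xi$ \emph{region classes} $R_1,\dots,R_\xi$, where $R_i$ collects the $i$-th support vertex $s_i$ together with the non-leaf non-support vertices of $T$ closest to it, so that outside its region the value $d(\cdot,R_i)$ is governed by $d(\cdot,s_i)$. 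I would then verify that $\Pi=\{F_1,\dots,F_\theta,R_1,\dots,R_\xi\}$ resolves $G$, in the spirit of the proof of Theorem \ref{ThPdcycles}(ii): leaves sharing a support vertex are separated by the $F_k$'s; two vertices inside one region or in two different regions are separated because the relevant distance is strictly monotone along the trees hanging off the cycle $C$; and two vertices of $C$ are separated by Claim \ref{distancescycles}. As a shortcut, the preceding Remark already gives $\pd(G)\le\kappa(T)+\tau(T)+1\le\xi(T)+\theta(T)+1$, so the explicit construction is only really needed in the ``tight'' case $\kappa(T)+\tau(T)=\xi(T)+\theta(T)$, which (unwinding the proof of the Remark) forces a very restricted structure on $T$ and shortens the analysis considerably.

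The main obstacle will be the resolving verification of the constructed partition. Two points need care: choosing the assignment of leaves to the $F_k$ and of internal vertices to the $R_i$ so that no two internal vertices get equal representations and so that the cycle $C$ acquires two \emph{adjacent} vertices lying in distinct classes (a single probed vertex never resolves a cycle, so Claim \ref{distancescycles} must be applied at an edge of $C$); and handling the distances that change when passing from $T$ to $G=T+e$, which is exactly where Claim \ref{distancescycles} enters and where the bookkeeping becomes of the same flavour as — though more laborious than — that in the proof of Theorem \ref{ThPdcycles}.
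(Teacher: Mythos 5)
Your lower-bound argument is correct and in fact supplies the details the paper leaves implicit: the hypothesis $\varepsilon(G)=n_1(T)$ is used exactly to rule out the extra edge joining two leaves of the same support vertex (via a better spanning tree), so at least $\theta(T)-1$ mutual twins survive as pendant vertices of $G$ with a common support and force pairwise distinct classes. This part is fine.

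The upper bound, however, is where the content of the theorem lies, and your proposal does not establish it. You propose a partition into $\theta$ leaf classes $F_1,\dots,F_\theta$ and $\xi$ region classes $R_1,\dots,R_\xi$, but you never verify that it resolves $G$; you explicitly defer this as ``the main obstacle''. Moreover, the construction itself is weaker than the one the paper uses. The paper's partition takes, for each support vertex $s_i$ of $T$, the \emph{singleton} class $A_i=\{s_{i1}\}$ consisting of one designated leaf, distributes the remaining leaves into classes $B_2,\dots,B_{\theta(T)}$, and places every non-leaf vertex into a single class $A$; the singletons are the whole point, since $d(x,A_i)=d(x,s_{i1})$ is an honest vertex distance, so the designated leaves act like a resolving set (two internal vertices are separated because one lies on the other's path to some $s_{l1}$, and two cycle vertices are separated via Claim \ref{distancescycles} applied to the deleted edge $uv$, using that $\delta_G(u)\ge 3$ or $\delta_G(v)\ge 3$, which again comes from $\varepsilon(G)=n_1(T)$). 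In your construction $d(\cdot,R_i)$ and $d(\cdot,F_k)$ are distances to sets scattered across the graph, and it is not argued -- and not clear -- that two internal vertices of one region, or two symmetrically placed cycle vertices, receive distinct representations for any choice of the assignment. Finally, the shortcut via the preceding Remark only disposes of the case $\kappa(T)+\tau(T)<\xi(T)+\theta(T)$ and sends you back to the unverified construction precisely in the tight case, so it does not close the gap.
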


\begin{proof}
The result follows for the cycle graphs $G=C_n$, so we suppose $G\ne C_n$. Notice that different leaves adjacent to the same support vertex must belong to different sets of a resolving partition. Also, as $\varepsilon(G)=n_1(T)$ we have $\pd(G)\ge \theta(T)-1$. Thus, the lower bound follows. To obtain the upper bound, let $T\in {\cal T}(G)$ be such that $n_1(T)=\varepsilon(G)$. Let $C$ be the set of vertices belonging to the cycle of $G=(V,E)$ and let $uv\in E$, such that $u,v\in C$ and $T=G-\{uv\}$. Since $n_1(T)=\varepsilon(G)$, we have $\delta_G(v)\ge 3$ or $\delta_G(u)\ge 3$, where $\delta_G(u)$ represents the degree of the vertex $u$ in $G$. Now, let $S=\{s_1,s_2,...,s_{\xi(T)}\}$ be the set of support vertices of $T$, and for every $s_i\in S$, let $\{s_{i1},s_{i2},...,s_{il_i}\}$ be the set of  leaves of $s_i$ and let $\theta(T)=\max_{i\in \{1,...,\xi(T)\}}\{l_i\}$.

Let now $A_i=\{s_{i1}\}$, for every $i\in \{1,...,\xi(T)\}$. Let $M_{ij}=\{s_{ij}\}$, for every $j\in \{2,...,l_i\}$. If $l_i<\xi(T)$, then we assume $M_{ij}=\emptyset$, for every $j\in \{l_{i+1},...,\theta(T)\}$. Let $B_j=\bigcup_{i=1}^{\xi(T)}M_{ij}$, for every $j\in \{2,...,\theta(T)\}$. We will show that the partition $\Pi=\{A,A_1,A_2,....,A_{\xi(T)},B_2,B_3,...,B_{\theta(T)}\}$, with $A=V(G)-\bigcup_{i=1}^{\xi(T)}A_i-\bigcup_{i=2}^{\theta(T)}B_i$, is a resolving partition for $G$. Let $x,y\in V$ be two different vertices in $G$. We have the following cases.
\\[1ex]
Case 1: $x\not\in C$ and $y\in C$. If $\delta_G(u)=2$ and $\delta_G(v)\ge 3$, then $u$ is a leaf in $T$ and we can suppose, without loss of generality, that $u=s_{i1}$, for some $i\in \{1,...,\xi(T)\}$, so $A_i=\{u\}$. Hence, if $y=u$ or $x$ is a leaf, then $x$ and $y$ belong to different sets of $\Pi$. On the contrary, if $y\ne u$ and $x$ is not a leaf, then there exist a leaf $s_{l1}$ such that $x$ belongs to a minimum $y-s_{l1}$ path, thus $d_G(y,A_l)>d_G(x,A_l)$.
Now, if $\delta_G(u)\ge 3$ and $\delta_G(v)\ge 3$, then let $a\in C$ such that $d_G(x,a)=\min_{b\in C}\{d_G(x,b)\}$. Hence, there exists a leaf $s_{j1}$ such that $x$ belongs to the $a-s_{j1}$ path. So, we have
$$d_G(y,A_j)=d_G(y,a)+d_G(a,A_j)>d_G(y,a)+d_G(x,A_j)\ge d_G(x,A_j).$$
\\[1ex]
Case 2: $x\not\in C$ and $y\not\in C$. If $x,y\in B_j$, for some $j\in \{2,...,\theta(T)\}$, then $x=s_{ij}$ and $y=s_{kj}$, with $1\ne j\ne k\ne 1$. So, we have
\begin{align*}
d_G(y,A_i)&=d_G(y,s_k)+d_G(s_k,s_i)+1\\
&\ge d_G(y,s_k)+2\\
&=d_G(y,s_k)+d_G(x,A_i)\\
&>d_G(x,A_i).
\end{align*}
On the other side, if $x,y\in A$, then there exists a leaf $s_{i1}$ such that either, $x$
belongs to one $y-s_{i1}$ path or $y$ belongs to one $x-s_{i1}$ path. So, we have $d_G(x,A_i)\ne d_G(y,A_i)$.
\\[1ex]
Case 3: $x,y\in C$. Now we have the following subcases.
\\[1ex]
Subcase 3.1: $\delta_G(u)\ge 3$ and $\delta_G(v)\ge 3$. Let $s_{k1}$ and $s_{j1}$, $j\ne k$ be two leaves, such that the $v-s_{k1}$ path share with cycle $C$ only the vertex $v$ and the $u-s_{j1}$ path share with cycle $C$ only the vertex $u$. If $d_G(x,u)\ne d_G(y,u)$, then we have
$$d_G(x,A_j)=d_G(x,u)+d_G(u,s_{j1})\ne d_G(y,u)+d_G(u,s_{j1})=d_G(y,A_j).$$
On the contrary, if $d_G(x,u)=d_G(y,u)$, then $d_G(x,v)\ne d_G(y,v)$ and we have
$$d_G(x,A_k)=d_G(x,v)+d_G(v,s_{k1})\ne d_G(y,v)+d_G(v,s_{k1})=d_G(y,A_k).$$
\\[1ex]
Subcase 3.2: Without loss of generality, assume $\delta_G(u)=2$ and $\delta_G(v)\ge 3$. Hence, $u$ is a leaf in $T$ and we can suppose, without loss of generality, that $u=s_{i1}$, for some $i\in \{1,...,\xi(T)\}$, so $A_i=\{u\}$. If $x=u$ or $y=u$, then $x,y$ belong to different sets of $\Pi$. If $d_G(x,u)\ne d_G(y,u)$, then $d_G(x,A_i)\ne d_G(y,A_i)$. On the other hand, if $d_G(x,u)=d_G(y,u)$, then  $d_G(x,v)\ne d_G(y,v)$. Now, let $s_{k1}$ be a leaf, such that the $v-s_{k1}$ path share with cycle $C$ only the vertex $v$. Hence, we have
$$d_G(x,A_k)=d_G(x,v)+d_G(v,s_{k1})\ne d_G(y,v)+d_G(v,s_{k1})=d_G(y,A_k).$$
Therefore, for every different vertices $x,y\in V$ we have $r(x|\Pi)\ne r(y|\Pi)$ and $\Pi$ is a resolving partition for $G$.
\end{proof}

Note that the above bound is achieved for unicyclic graphs having at most two exterior major vertices and each one of them  has terminal degree  one. In such a case, $\pd(G)=3.$



The following conjecture, if true, would be completely analogous to the estimate
known for the metric dimension.

\begin{conjecture}
If $T$ is a spanning tree of a unicyclic graph $G$,  then $$\pd(G)\le \pd(T)+1.$$
\end{conjecture}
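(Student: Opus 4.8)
\medskip
\noindent\textbf{Towards a proof of the conjecture.} The plan is to transfer the metric-dimension estimate $\dim(G)\le\dim(T)+1$ of (\ref{dimensionUniTrees}) to partitions, by starting from an optimal resolving partition of the spanning tree and adjoining \emph{one} new class that isolates a vertex of the cycle. Write $e=uv$ for the unique edge of $G$ outside $T$; it lies on the cycle $C$ of $G$, and $T=G-e$. Fix a resolving partition $\Pi=\{P_1,\dots,P_t\}$ of $T$ with $t=\pd(T)$, and regard $\Pi$ also as a partition of $V(G)$.

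First I would build from $\Pi$ a partition $\Pi'$ of $V(G)$ with $|\Pi'|\le t+1$ having $\{v\}$ as a class: if $\{v\}\notin\Pi$, split $v$ off its class (this adds one class); if $\{v\}\in\Pi$, instead split off the other endpoint $u$ (and, in the further degenerate case that $\{u\}\in\Pi$ too, a cycle-neighbour of $v$). In every case $\Pi'$ differs from $\Pi$ only by moving one or two vertices of $C$ into singletons, so $|\Pi'|\le t+1$; moreover, since $d_G(x,P_j)=\min\{d_G(x,v),\,d_G(x,P_j\setminus\{v\})\}$ for the one modified class, the representation $r_G(x|\Pi')$ determines $r_G(x|\Pi)$ for every $x$. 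Next I would record the elementary identity valid because $G$ has exactly one edge more than $T$ (any shortest path of $G$ from $x$ to $P_i$ uses $e$ at most once):
\[
d_G(x,P_i)=\min\bigl(\,d_T(x,P_i),\ d_T(x,v)+1+d_T(u,P_i),\ d_T(x,u)+1+d_T(v,P_i)\,\bigr);
\]
hence $d_G(x,P_i)\le d_T(x,P_i)$, with equality unless some shortest $x$-$P_i$ path of $G$ traverses $e$, and on vertices all of whose geodesics avoid $e$ one has $r_G(\cdot|\Pi)=r_T(\cdot|\Pi)$.

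Now assume, for contradiction, that $x\ne y$ with $r_G(x|\Pi')=r_G(y|\Pi')$; by the previous paragraph $r_G(x|\Pi)=r_G(y|\Pi)$ and $d_G(x,v)=d_G(y,v)$ (and $d_G(x,u)=d_G(y,u)$ in the second construction). Since $\Pi$ resolves $T$, some class $P_{i_0}$ satisfies, say, $d_T(x,P_{i_0})<d_T(y,P_{i_0})$, so $d_G(y,P_{i_0})=d_G(x,P_{i_0})\le d_T(x,P_{i_0})<d_T(y,P_{i_0})$ and the identity forces a shortest $y$-$P_{i_0}$ path of $G$ to run through $e$. Feeding this back into the identity together with $d_G(x,v)=d_G(y,v)$ and Claim~\ref{distancescycles}, applied around $C$ to the vertices of $C$ nearest to $x$ and to $y$, I would pin down those cycle vertices and the distances from $x$ and $y$ to them, and deduce $x=y$, the desired contradiction. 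Isolating the endpoints of $e$ in the construction above is precisely what ``anchors'' the wrap-around distances used here.

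The hard part is this last step when $x$ and $y$ have geodesics to \emph{several} classes all wrapping through $e$: a single new coordinate $d_G(\cdot,\{v\})$ does not by itself recover both tree-distances $d_T(\cdot,v)$ and $d_T(\cdot,u)$---for instance $d_G(x,v)=m$ with $d_G(x,u)=m+1$ leaves $d_T(x,u)$ undetermined beyond $\ge m+1$---so the ``tree part'' and the cyclic part of $r_G$ cannot be decoupled by a purely local argument. I expect the clean route is to prove first a structural lemma---\emph{the only pairs that a resolving partition $\Pi$ of $T$ fails to resolve in $G$ are pairs $x\ne y$ whose $G$-geodesics both wrap through $e$ and satisfy $d_G(x,v)=d_G(y,v)$}---and then to show that no such pair can also satisfy $d_G(x,u)=d_G(y,u)$, so that isolating both endpoints of $e$ breaks every remaining tie; checking that this still costs only one class beyond $\pd(T)$ in the degenerate configurations (when $u$, $v$, or cycle-neighbours of $v$, already form distinct non-singleton classes of an optimal $\Pi$) is a minor counting issue to be settled along the way.
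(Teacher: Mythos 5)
There is a genuine gap here, and you have in fact named it yourself: the final step, resolving pairs $x\neq y$ lying in different subtrees hanging off the cycle whose geodesics to the relevant classes wrap through $e=uv$, is exactly the point where the argument does not close, and no amount of local bookkeeping with a single new singleton class fixes it. Be aware that the statement is presented in the paper as a \emph{conjecture}; the authors explicitly say it seems very hard and only prove the weakened bound $\pd(G)\le\pd(T)+3$. Their proof of that weaker bound is essentially your strategy pushed to where it actually works: start from an optimal resolving partition $\Pi$ of $T=G-c_0c_1$ and adjoin singleton classes for cycle vertices --- but they need \emph{three} anchors, $\{c_0\}$, $\{c_1\}$ and $\{c_{\lfloor k/2\rfloor}\}$, not one. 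The reason three are needed is visible in their Case 2: for $x$ in the subtree rooted at $c_i$ and $y$ in the subtree rooted at $c_j$, the hypothesis $d_G(x,c_r)=d_G(y,c_r)$ translates into $d_G(x,c_i)-d_G(y,c_j)=d_G(c_j,c_r)-d_G(c_i,c_r)$, and when $c_i$ and $c_j$ lie on the same arc of the cycle (say $1<i<j<\lfloor k/2\rfloor$) the right-hand side is the same value $j-i$ for both $r=0$ and $r=1$; only the third, far-away anchor $c_{\lfloor k/2\rfloor}$ produces the incompatible value $i-j$ and hence a contradiction. So even isolating \emph{both} endpoints of the deleted edge --- your fallback --- leaves such pairs unresolved, which also shows that the structural lemma you propose (``no unresolved pair can satisfy both $d_G(x,v)=d_G(y,v)$ and $d_G(x,u)=d_G(y,u)$'') is false as stated: two vertices attached at distinct cycle vertices on the same side of the cut can agree in both coordinates.

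A second, subtler obstruction to the whole transfer strategy is that, unlike in the metric-dimension proof of (\ref{dimensionUniTrees}) where one \emph{adds} a landmark without disturbing the others, carving a singleton out of a class $P_j$ changes the coordinate $d(\cdot,P_j)$ itself, and passing from $T$ to $G$ changes $d(\cdot,P_i)$ for every class met by a shortcut through $e$; your identity $d_G(x,P_i)=\min(d_T(x,P_i),\,d_T(x,v)+1+d_T(u,P_i),\,d_T(x,u)+1+d_T(v,P_i))$ is correct, but a pair resolved in $T$ by $P_{i_0}$ can have both of its $G$-distances to $P_{i_0}$ collapse to a common shortcut value, and nothing in the single extra coordinate $d_G(\cdot,\{v\})$ rules this out. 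Your proposal is an honest and reasonable first attack, but as written it proves nothing beyond what the paper's Proposition already establishes, and the conjecture itself remains open.
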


According to (\ref{dimensionUniTrees}), Lemma \ref{pd(pn)=2} and Theorem  \ref{ThPdcycles} (i),   the above conjecture is true for every cycle graph and for every unicyclic graph where every exterior major vertex  has terminal degree  one.
Even so, the previous conjecture seems to be very hard to prove.
We therefore present the following weakened version.

\begin{proposition}
If $T$ is a spanning tree of a unicyclic graph $G$,  then $$\pd(G)\le
\pd(T)+3.$$
\end{proposition}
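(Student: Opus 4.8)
The plan is to fix a spanning tree $T = G-uv\in\mathcal T(G)$, where $uv$ is an edge of the unique cycle $C$ of $G$, take a resolving partition $\Pi=\{P_1,\dots ,P_t\}$ of $T$ with $t=\pd(T)$, and enlarge $\Pi$ by three parts that re-encode the cyclic information lost when $uv$ is removed. The structural facts I would rely on are: every vertex $x$ of $G$ has a unique nearest cycle vertex $x_C$ and a height $h(x)=d_G(x,x_C)$; the values $x_C$ and $h(x)$ are the same computed in $G$ or in $T$, since a geodesic from $x$ to $C$ never runs along $C$; for $Q\subseteq V(C)$ one has $d_G(x,Q)=h(x)+d_C(x_C,Q)$ with $d_C$ the distance inside $C$; and inside any maximal subtree $T_\alpha$ of $G$ attached to $C$ at a single vertex $c_\alpha$, distances in $G$ and in $T$ coincide.

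First I would build $\Pi^{\ast}$: pick two adjacent cycle vertices $w_1,w_2$ and a third cycle vertex $w_3$ taken roughly antipodal to the edge $w_1w_2$, set $W=\{w_1,w_2,w_3\}$, and let $\Pi^{\ast}=\{P_1\setminus W,\dots ,P_t\setminus W,\{w_1\},\{w_2\},\{w_3\}\}$, discarding any emptied $P_i\setminus W$; this has at most $t+3$ parts. Then I would verify that $\Pi^{\ast}$ resolves $G$, splitting a pair $x\ne y$ into cases. If $x,y\in V(C)$, they are separated by $\{w_1\}$ and $\{w_2\}$ via Claim~\ref{distancescycles}. If $(x_C,h(x))\ne (y_C,h(y))$, they are separated by one of $\{w_1\},\{w_2\},\{w_3\}$: since $d_G(x,\{w_j\})=h(x)+d_C(x_C,w_j)$, the three values determine the pair $(x_C,h(x))$, because for $w_1,w_2$ adjacent and $w_3$ antipodal to the edge $w_1w_2$ the map $c_a\mapsto\bigl(d_C(c_a,w_1)-d_C(c_a,w_3),\,d_C(c_a,w_2)-d_C(c_a,w_3)\bigr)$ is injective on $V(C)$. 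The remaining, and decisive, case is $x,y\in T_\alpha$ with $h(x)=h(y)$: here $\Pi$ resolves $x,y$ in $T$, so the distances from $x$ and from $y$ to the parts measured inside $T_\alpha$ cannot all agree, whence some $P_i$ has $d_G(x,P_i\cap T_\alpha)\ne d_G(y,P_i\cap T_\alpha)$, and one must argue that this difference survives in the corresponding coordinate of $\Pi^{\ast}$.

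That last step is the main obstacle. The difficulty is that the edge $uv$ creates short‑cuts: a part $P_i$ that separated $x,y$ in $T$ may contain a vertex lying far from $c_\alpha$ around the broken cycle in $T$ but near $c_\alpha$ in $G$, and since $x$ and $y$ have equal height this vertex contributes the same amount to $d_G(x,P_i)$ and to $d_G(y,P_i)$, possibly washing out the separation. One therefore has to show that a separation always survives somewhere; the key tool is that the part of $\Pi$ containing $c_\alpha$ is immune to these short‑cuts — its contribution to $d_G(x,\cdot)$ is at most $h(x)=h(y)$, whereas every path leaving $T_\alpha$ has length at least $h(x)+1$ — so that coordinate faithfully reads off the within‑$T_\alpha$ distance, and the remaining work concerns pairs separated only by coordinates that are washed out. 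One must also check that deleting $W$ from the $P_i$ does no harm elsewhere: being cycle vertices, $w_1,w_2,w_3$ are removed from at most the root $c_\alpha$ of three subtrees $T_\alpha$, and those deletions are absorbed by the preceding argument (when $c_\alpha$ was the removed witness, the two within‑$T_\alpha$ distances to $P_i\setminus W$ straddle $h(x)$ and hence still differ). Getting this bookkeeping to close is where the slack of three extra parts, rather than the single one predicted by the conjecture, is spent, and it is the bulk of the proof.
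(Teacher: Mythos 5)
Your construction is exactly the paper's: cut one cycle edge $c_0c_1$ to obtain $T$, take an optimal resolving partition of $T$, delete the three cycle vertices $c_0$, $c_1$, $c_{\lfloor k/2\rfloor}$ from its parts and reinsert them as singletons. Your treatment of pairs whose nearest cycle vertices or heights differ is in substance the paper's Case~2 (the injectivity of the difference map that you assert without proof is what the paper verifies by the explicit computations in its subcases 2.1--2.3), and that portion is fine up to writing out that verification. The problem is the case you yourself label ``the main obstacle'' and ``the bulk of the proof'': $x,y$ in the same attached subtree $T_\alpha$ with $h(x)=h(y)$. There you describe the danger (a shortcut through the reinstated edge could pull both $d_G(x,P_i)$ and $d_G(y,P_i)$ down to the common value $d_G(x,P_i\setminus T_\alpha)=d_G(y,P_i\setminus T_\alpha)$ and erase the separation), you name one tool (the part containing $c_\alpha$ is immune to shortcuts), you concede that this tool does not by itself finish the argument, and then you stop. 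A proof that explicitly declines to carry out the step it identifies as decisive is not a proof; the gap is genuine and sits exactly where you placed it.

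For comparison, the paper dispatches this case in three lines: since $d_G(x,c_\alpha)=d_G(y,c_\alpha)$, every vertex $v\notin T_\alpha$ satisfies $d_G(x,v)=d_G(y,v)$, so a part $A_j$ separating $x$ and $y$ in $T$ must do so through vertices of $A_j$ inside $T_\alpha$, where $G$- and $T$-distances coincide; it then concludes $d_G(x,A_j-D)=d_T(x,A_j)\ne d_T(y,A_j)=d_G(y,A_j-D)$. That last equality tacitly assumes the common outside contribution does not undercut both inside minima --- precisely the wash-out you worry about --- so your concern is not frivolous; but the paper at least commits to an argument and closes the case, whereas your write-up leaves it open. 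To repair your proposal you must either show that for \emph{some} part separating $x,y$ in $T$ the within-subtree minimum is at most the (common) outside distance, or alter the construction so that such a part is guaranteed to exist.
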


\begin{proof}
 Arbitrarily cut the cycle $C=\{c_0,\dots,c_{k-1}\}$ by deleting,
without loss of generality, $c_0c_{1}$. This results in a (spanning) tree $T$. Let
$\Pi$ be an optimum resolving partition for $T$, i.e.,
$\Pi=\{A_1,\dots,A_{\pd(T)}\}$. Let $D=\left\{c_0,c_1,c_{\left\lfloor\frac{k}
{2}\right\rfloor}\right\}$
and define $A_i^G=A_i-D$.
We claim that
$$\Pi^G=\left\{A_1^G,\dots,A^G_{\pd(T)},\{c_0\}, \{c_1\},\left\{c_{\left\lfloor\frac{k}{2}\right\rfloor}\right\}\right\}$$
is a resolving partition for $G$, where we only take the nonempty sets $A_i^G=A_i-D$. So, the order  of this partition may be less than  $\pd(T)+3$.

For every $i\in \{0,1,...,k-1\} $, let $T_i=(V_i,E_i)$ be the subtree of $G$ rooted at $c_i$. Note that may occur that $V_i=\{c_i\}$.
We differentiate two cases for  $x,y\in V(G)$, $x\ne y$.

Case 1. $x,y\in V_i$. If $d_G(x,c_i)\ne d_G(y,c_i)$, then $d_G(x,c_0)\ne d_G(y,c_0)$. Now, if $d_G(x,c_i)=d_G(y,c_i)$, then for every $v\in V(G)-V_i$, it follows $d_G(x,v)=d_G(y,v)$ (notice that $d_T(x,v)=d_T(y,v)$). Thus, for $A_j\in \Pi$ such that $d_T(x, A_j)\ne d_T(y, A_j)$,  there exist $a,b\in A_j\cap V_i$   such that $d_T(x, A_j)=d_T(x,a)\ne d_T(y,b)=d_T(y, A_j)$. Hence, $d_G(x, A_j^G)=d_T(x, A_j)\ne d_T(y, A_j)=d_G(y, A_j^G)$.

Case 2.  $x\in V_i$, $y\in V_j$, $i\ne j$. We claim that there exists $r\in \left\{0,1,\left\lfloor\frac{k}{2}\right\rfloor\right\}$ such that $d_G(x,c_r)\ne d_G(y,c_r)$. We proceed by contradiction, i.e., suppose that $d_G(x,c_r)= d_G(y,c_r)$, for every $r\in \left\{0,1,\left\lfloor\frac{k}{2}\right\rfloor\right\}$. In this case we obtain the following three equalities.
\begin{equation}
d_G(x,c_i)+d_G(c_i,c_r)= d_G(y,c_j)+d_G(c_j,c_r),\quad r\in \left\{0,1,\left\lfloor\frac{k}{2}\right\rfloor\right\},
\end{equation}
or equivalently, \begin{equation}\label{igualdades-distancias}
d_G(x,c_i)-d_G(y,c_j)=d_G(c_j,c_r)-d_G(c_i,c_r),\quad r\in \left\{0,1,\left\lfloor\frac{k}{2}\right\rfloor\right\}.
\end{equation}
Now we differentiate the following subcases:
\\
\noindent
2.1. $1<i<j<\left\lfloor\frac{k}{2}\right\rfloor$. For $r=1$ and $r=\left\lfloor\frac{k}{2}\right\rfloor$ in (\ref{igualdades-distancias}) we deduce $j-i=i-j$, which is a contradiction.
\\
\noindent
2.2. $\left\lfloor\frac{k}{2}\right\rfloor<i<j<k$. For $r=1$ and $r=\left\lfloor\frac{k}{2}\right\rfloor$ in (\ref{igualdades-distancias}) we deduce $i-j=j-i$, which is a contradiction.
\\
\noindent
2.3. $1<i<\left\lfloor\frac{k}{2}\right\rfloor$ and $\left\lfloor\frac{k}{2}\right\rfloor<j<k$. For $r=0$ and $r=1$ in (\ref{igualdades-distancias}) we deduce $k-j-i=k-i-j+2$, which is a contradiction.

Therefore, $\Pi^G$ is a resolving partition for $G$.
\end{proof}

\end{document}